\documentclass[11pt,reqno]{amsart}
\usepackage{amscd,graphicx,psfrag,amssymb,color}

\newcommand{\p}{\partial}

\newcommand{\Z}{\mathbb Z}

\renewcommand{\phi}{\varphi}

\newcommand{\ad}{\operatorname{ad}}

\newcommand{\lk}{\operatorname{lk}\,}

\newcommand{\Int}{\operatorname{int}}

\newcommand{\I}{\mathcal I}

\newtheorem{theorem}{Theorem}[section]
\newtheorem{lemma}[theorem]{Lemma}

\newtheorem{corollary}[theorem]{Corollary}

\theoremstyle{definition}
\newtheorem*{remark}{Remark}
\newtheorem*{example}{Example}

\pagestyle{plain}

\author[Eric Harper]{Eric Harper}
\address{CIRGET\newline\indent
Universit\'e du Qu\'ebec a Montr\'eal \newline\indent Case Postale 8888, Succursale Centre-ville \newline\indent Montr\'eal, QC, H3C 3P8 }
\email{\rm{harper@cirget.ca}}

\author[Nikolai Saveliev]{Nikolai Saveliev}
\address{Department of Mathematics\newline\indent
University of Miami \newline\indent PO Box 249085
\newline\indent Coral Gables, FL, 33124}
\email{\rm{saveliev@math.miami.edu}}

\begin{document}

\title{Instanton Floer homology for two-component links}

\maketitle


\section{Introduction}
The instanton Floer homology for integral homology spheres was defined by 
Andreas Floer \cite{floer1}. It has been making a comeback lately, in 
particular, in the work of Lim \cite{lim} and Kronheimer-Mrowka \cite{KM1} 
on the instanton Floer homology of knots. The interest in this 
particular version of the theory, which first appeared in Floer \cite{floer2}, 
is explained by its conjectured relationship with the Seiberg--Witten and 
Heegaard Floer homologies. This is evidenced, for instance, by the fact 
that the Alexander polynomial of the knot can be expressed in terms of its 
instanton Floer homology; see \cite{KM1} and \cite{lim}. 

In this paper we define instanton Floer homology for links of two 
components in an integral homology sphere by a slight variant of the 
construction of Floer \cite{floer2} (which was later developed by Braam and 
Donaldson \cite{BD}). We show that the Euler characteristic of this homology 
theory is twice the linking number between the components of the link. A 
similar result for two-component links in the 3-sphere was announced by Braam 
and Donaldson \cite[Part II, Example 3.13]{BD} with an outline of the proof 
that relied on the Floer exact triangle. Our approach is more direct and its 
main advantage is that it yields the result for links in arbitrary homology 
spheres. 

We provide several examples of calculations of Floer homology groups of
links. We also observe that our Floer homology can be viewed as a reduced 
version of Kronheimer and Mrowka's instanton Floer homology for links.

Our interest in the Floer homology for two-component links was generated by 
the papers \cite{KM1} and \cite{lim} and by our own study of the linking 
number in~\cite{HS}.

We are thankful to the referee for useful remarks which helped us improve 
the paper. 


\section{The instanton Floer homology $\I_*(\Sigma,L)$}
Let $L = \ell_1 \cup \, \ell_2$ be an oriented link of two components in 
an integral homology sphere $\Sigma$. Consider the link exterior $X = 
\Sigma - \Int N(L)$ and furl it up by gluing the boundary components of $X$ 
together via an orientation reversing diffeomorphism $\phi : T^2 \to T^2$.
The resulting closed orientable manifold will be denoted $X_{\phi}$. 

\begin{lemma}\label{L:choice}
The gluing map $\phi$ can be chosen so that $X_{\phi}$ has the integral 
homology of $S^1 \times S^2$.
\end{lemma}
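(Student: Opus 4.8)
The plan is to realize $X_\phi$ as the exterior $X$ with a product region $T^2\times[0,1]$ attached --- one end glued to the boundary torus $T_1=\partial N(\ell_1)$ by a fixed identification and the other end glued to $T_2=\partial N(\ell_2)$ by $\phi$ --- and then to compute $H_*(X_\phi)$ by Mayer--Vietoris.

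First I would record the homology of $X$. Since $\Sigma$ is an integral homology sphere, Alexander duality gives $H_0(X)\cong\Z$, $H_1(X)\cong\Z^2$ with the meridians $\mu_1,\mu_2$ as a basis, $H_2(X)\cong\Z$, and $H_3(X)=0$; moreover a Seifert surface for $\ell_i$ meets $\ell_j$ transversally in $k:=\lk(\ell_1,\ell_2)$ signed points, so in $H_1(X)$ the Seifert longitudes satisfy $[\lambda_1]=k\,[\mu_2]$ and $[\lambda_2]=k\,[\mu_1]$. This fixes the inclusion-induced map $H_1(T_1\sqcup T_2)\to H_1(X)$.

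Next I would feed this into the Mayer--Vietoris sequence of $X_\phi=X\cup_{T_1\sqcup T_2}(T^2\times[0,1])$. Because $\widetilde H_0(T_1\sqcup T_2)\cong\Z$ while $\widetilde H_0$ of each of the two pieces vanishes, the sequence collapses to a short exact sequence $0\to\coker\beta\to H_1(X_\phi)\to\Z\to 0$, where $\beta\colon H_1(T_1)\oplus H_1(T_2)\to H_1(X)\oplus H_1(T^2)$ is the difference of the two inclusion-induced maps; since $\Z$ is free this splits, so $H_1(X_\phi)\cong\Z\oplus\coker\beta$. If $\phi$ is chosen orientation-reversing then $X_\phi$ is a closed connected orientable $3$-manifold, and Poincar\'e duality together with the universal coefficient theorem then force $H_0(X_\phi)\cong H_3(X_\phi)\cong\Z$ and $H_2(X_\phi)\cong H^1(X_\phi)\cong\Hom(H_1(X_\phi),\Z)$. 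Hence the whole statement reduces to finding an orientation-reversing $\phi$ with $\coker\beta=0$.

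Finally I would produce such a $\phi$ explicitly. Identify each $T_i$ with $T^2$ so that $(\mu_i,\lambda_i)$ is the standard basis; with the orientations $T_1$ and $T_2$ inherit as boundary components of $X$ these bases are compatibly oriented, so $\phi$ is orientation-reversing exactly when its action on $H_1$ has determinant $-1$. Writing $\beta$ out as a $4\times4$ integer matrix from the data above and expanding, one finds that if $\phi^{-1}_*(\mu_2)=a\mu_1+b\lambda_1$ and $\phi^{-1}_*(\lambda_2)=c\mu_1+d\lambda_1$ (so $ad-bc=-1$), then $\det\beta=b\,k^2+c-2k$; the choice $\phi^{-1}_*(\mu_2)=\mu_1$, $\phi^{-1}_*(\lambda_2)=(2k+1)\mu_1-\lambda_1$ has determinant $-1$ and gives $\det\beta=1$, so $\coker\beta=0$ and $H_1(X_\phi)\cong\Z$. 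The step I expect to demand the most care is this last one: a priori the orientability constraint $\det\phi_*=-1$ and the unimodularity $\det\beta=\pm1$ could be incompatible, together with the bookkeeping of the boundary orientations, and the content of the computation --- which works for every value of the linking number $k$ --- is that they are not.
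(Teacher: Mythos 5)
Your proof is correct and follows essentially the same route as the paper: the Mayer--Vietoris computation reproduces exactly the paper's presentation of $H_1(X_\phi)$ and arrives at the same unimodularity condition $bn^2 - 2n + c = \pm 1$ (your $bk^2 + c - 2k$), compatible with $ad-bc=-1$. Your explicit choice $(a,b,c,d) = (1,0,2k+1,-1)$ simply instantiates the paper's closing remark that such a $\phi$ can always be found.
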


\begin{proof}
An orientation reversing diffeomorphism $\phi$ is determined up to isotopy 
by the homomorphism $\phi_*: \Z^2 \to \Z^2$ it induces on the fundamental
groups of the two boundary components of $X$. Let $\mu_1$, $\lambda_1$ be 
the canonical oriented meridian--longitude pair on one boundary component 
of $X$, and $\mu_2$, $\lambda_2$ on the other. With respect to this choice 
of bases, $\phi_*$ is given by an integral matrix 
\[
\phi_* = 
\left( \begin{array}{cc} a & c \\ b & d \end{array} \right)\quad
\text{with}\quad ad - bc = -1.
\]
Now $H_1 (X_{\phi})$ has generators $t$, $\mu_1$, $\mu_2$ and relations
\[
\begin{array}{lcl} \mu_2 & = & a\mu_1 + b\lambda_1, \\ 
\lambda_2 & = & c\mu_1 + d\lambda_1, \end{array}
\]
where $\lambda_1 = n\mu_2$ and $\lambda_2 = n\mu_1$ with $n = \lk(\ell_1,
\ell_2)$. In particular, $X_{\phi}$ has the integral homology of $S^1 \times 
S^2$ exactly when
\begin{equation}\label{E:eq}
\det \left(\begin{array}{cc} a & bn-1 \\ c-n & dn \end{array} \right) = \;
bn^2 - 2n + c\; =\; \pm 1.
\end{equation}
It is clear that one can always find $\phi$ such that this is the case. 
\end{proof}

\begin{remark}
This construction appeared in Brakes \cite{brakes} and Woodard \cite{wood} 
under the name of ``sewing-up link exteriors'', and was generalized by Hoste 
in \cite{hoste}. We thank Daniel Ruberman for drawing our attention to these 
papers. 
\end{remark}

Given a link $L$ of two components in an integral homology sphere $\Sigma$, 
choose the gluing map $\phi$ so that $H_* (X_{\phi}) = H_* (S^1 \times S^2)$ 
and let
\[
\I_* (\Sigma,L) = I_* (X_{\phi}).
\]

Here, $I_* (X_{\phi})$ is the instanton Floer homology defined in Floer 
\cite{floer2}, see also Braam--Donaldson \cite{BD}, as follows. Let $E$ be 
a $U(2)$--bundle over $X_{\phi}$ such that $c_1(E)$ is an odd element in 
$H^2 (X_{\phi}) = \mathbb Z$. Consider the space of $PU(2)$--connections in 
the adjoint bundle $\ad (E)$ modulo the action of the gauge group consisting 
of automorphisms of $E$ with determinant one. The Floer homology arising 
from the Chern--Simons functional on this space is $I_*(X_{\phi})$. It has a 
relative grading by $\mathbb Z/8$. 

We will refer to $\I_* (\Sigma,L)$ as the \emph{instanton Floer homology} of 
the two-component link $L \subset \Sigma$.

\begin{theorem}\label{linking}
Let $L = \ell_1 \cup \, \ell_2$ be an oriented two-component link in an 
integral homology sphere $\Sigma$. Then $\I_* (\Sigma,L)$ is independent 
of the choice of $\phi$, and its Euler characteristic is given by
\[
\chi(\I_*(\Sigma,L))\; =\; \pm\, 2 \lk(\ell_1 \cup \, \ell_2).
\]  
\end{theorem}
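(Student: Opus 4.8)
The plan is to compute $\chi(\I_*(\Sigma,L))$ as a signed count of flat connections on $X_\phi$ and to deduce independence of $\phi$ from the rigidity of the flat connection carried by the gluing torus, thereby replacing the exact-triangle argument of \cite{BD} by a gluing argument. First I would invoke the standard principle, going back to Floer, that after perturbing the Chern--Simons functional so that the critical set is nondegenerate one has $\chi(I_*(X_\phi))=\sum_\alpha(-1)^{\ind\alpha}$, the sum running over the (perturbed) flat $PU(2)$--connections $\alpha$ on $\ad(E)$ graded by their Floer index $\ind\alpha\in\Z/8$; this signed count is a Casson-type invariant $\tau(X_\phi)$ of the admissible pair $(X_\phi,\ad E)$, independent of all auxiliary choices. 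It therefore suffices to prove (i) that $I_*(X_\phi)$, and a fortiori $\tau(X_\phi)$, does not depend on the admissible $\phi$, and (ii) that $\tau(X_\phi)=\pm2\lk(\ell_1,\ell_2)$.

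For (i) I would cut $X_\phi$ along the torus $T$ that is the common image of the two boundary tori of $X$. Since $c_1(E)$ is odd, $\ad(E)$ restricts to the nontrivial $SO(3)$--bundle over $T$, which carries a \emph{unique} flat connection $\theta_0$, with holonomy a Klein four-group $V\subset SO(3)$, and this $\theta_0$ satisfies $H^0(T;\ad\theta_0)=H^1(T;\ad\theta_0)=0$; thus $\theta_0$ is isolated, nondegenerate, and rigid, with finite stabilizer $V$. Choosing the holonomy perturbations to be supported in the interior of $X$, every perturbed flat connection on $X_\phi$ restricts on $T$ to $\theta_0$, and since a flat connection with vanishing $H^1$ admits no deformations propagating across the neck it occupies, a neck-stretching (equivalently excision) argument identifies the moduli spaces on $X_\phi$ --- together with their obstruction spaces, orientations, and relative gradings --- with the corresponding objects built from $X$ relative to the fixed boundary value $\theta_0$; in those objects $\phi$ enters only through its action on the single point $[\theta_0]$ and on the finite group $V$, hence canonically. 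Pushing this identification through the construction of the Floer complex and its differential produces a canonical isomorphism $I_*(X_\phi)\cong I_*(X_{\phi'})$ for any two admissible gluings. I expect this step to be the main obstacle: it is what stands in for the Floer exact triangle, and making it rigorous demands care with the perturbations, with orientations of the glued-up moduli spaces, and with the bookkeeping forced by the nontrivial stabilizer $V$.

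For (ii), using the description above, $\tau(X_\phi)$ becomes a signed count of representations $\rho\colon\pi_1(X)\to SO(3)$ whose restrictions to the two peripheral subgroups map \emph{onto} $V$, decorated by a gluing parameter (the holonomy around the new loop $t$) lying in the normalizer of $V$ modulo the centralizer of $\rho$. Equivalently, on the level of Euler characteristics the gluing formula expresses $\tau(X_\phi)$ as a signed intersection/Lefschetz number of the relative Donaldson--Floer invariant of $X$ against the rigid flat connection $\theta_0$ on each boundary torus; this number is controlled entirely by the peripheral structure of $X$, into which the linking number enters through the identities $\lambda_1=n\mu_2$ and $\lambda_2=n\mu_1$ in $H_1(X)=\Z\mu_1\oplus\Z\mu_2$ --- the same data that produced the determinant in \eqref{E:eq}. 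Evaluating it --- perturbing as needed to make the count finite, and weighting by the $\Z/8$--grading and by the Klein four-group of gluing parameters --- yields $\pm2n$, the sign depending only on the orientation of $L$ and the choice of generator of $H^2(X_\phi)=\Z$. As a check one recovers $\tau=0$ for the two-component unlink and $\tau=\pm2$ for the Hopf link, whose $X_\phi$ is a torus bundle over $S^1$ with directly computable Floer homology.
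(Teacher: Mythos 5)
Your step (i) is essentially a sketch of Floer's excision principle: the observation that the nontrivial $SO(3)$--bundle over the gluing torus carries a unique flat connection, with Klein four-group holonomy and vanishing $H^0$ and $H^1$ of its adjoint bundle, is correct and is exactly what makes excision work. The paper simply cites this (Floer, and Braam--Donaldson, Part II, Proposition 3.5) rather than reproving it, so nothing is lost there, although you should be aware that you are proposing to redo a substantial piece of the literature and you acknowledge yourself that the perturbation, orientation, and stabilizer bookkeeping is not carried out.

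The genuine gap is in step (ii). After reducing $\chi(\I_*(\Sigma,L))$ to a signed count of flat $PU(2)$--connections on $X_\phi$, equivalently of $SO(3)$--representations of $\pi_1(X)$ with prescribed peripheral behavior and a gluing parameter, you assert that this count ``is controlled entirely by the peripheral structure of $X$'' and that ``evaluating it yields $\pm 2n$.'' No mechanism is given, and none is obvious: the set of such representations depends on the whole link group, not just on $H_1(X)$ and the identities $\lambda_1=n\mu_2$, $\lambda_2=n\mu_1$ --- indeed the Floer homology itself is far from being determined by the linking number (non-split links with $\lk=0$ have nontrivial $\I_*$). What collapses the nonabelian count to a homological quantity is precisely a theorem you would still need: either a Casson-type formula identifying the signed count with $\pm\Delta''(1)$ for an admissible bundle over a homology $S^1\times S^2$, or the Floer exact triangle as in Braam--Donaldson. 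The paper supplies exactly this missing ingredient: it quotes $\chi(I_*(X_\phi))=\pm\Delta''(1)$ (with a direct proof in the cited reference) and then actually computes the Alexander polynomial of $X_\phi$ from the $\Z[t,t^{-1}]$--presentation of $H_1$ of the infinite cyclic cover, obtaining $\Delta(t)=\pm(-nt+(2n\pm1)-nt^{-1})$ subject to the constraint \eqref{E:eq}, whence $\Delta''(1)=\pm2n$. Your proposal contains no analogue of this computation, so the appearance of the factor $2\lk(\ell_1,\ell_2)$ is asserted rather than derived; the closing consistency checks (unlink, Hopf link) do not substitute for it.
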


\begin{proof}
The first statement follows from the excision principle of Floer \cite{floer2},
see also \cite[Part II, Proposition 3.5]{BD}. Since $X_{\phi}$ is a homology 
$S^1 \times S^2$, we know that $\chi(I_*(X_{\phi})) = \pm \Delta''(1)$, where 
$\Delta(t)$ is the Alexander polynomial of $X_{\phi}$ normalized so that 
$\Delta(1) = 1$ and $\Delta(t) = \Delta(t^{-1})$ (a direct proof of this 
result can be found in \cite{masa}). To calculate $\Delta(t)$, let 
$\widetilde X_{\phi}$ be the infinite cyclic cover of $X_{\phi}$. Then 
$H_1(\widetilde X_{\phi})$, as a $\Z[t,t^{-1}]$--module, has generators $\mu_1$, 
$\mu_2$ and relations 
\[
\begin{array}{lcl} t\mu_2 & = & a\mu_1 + b\lambda_1, \\ 
t\lambda_2 & = & c\mu_1 + d\lambda_1, \end{array}
\]  
where $\lambda_1 = n \mu_2$ and $\lambda_2 = n \mu_1$. Therefore,
\[
\Delta(t) = \det \left( \begin{array}{cc} a & bn-t \\ 
c-nt & dn \end{array} \right) \\
= -nt^2 + (bn^2 + c)t - n, 
\]
up to a unit in $\Z[t,t^{-1}]$. After taking \eqref{E:eq} into account, we 
obtain 
\[
\Delta(t) =  \pm (-nt + (2n \pm 1) - nt^{-1})
\]
so that
\[
\Delta''(1) = \pm\, 2 n = \pm\,2 \lk(\ell_1,\ell_2).
\]
\end{proof}

\begin{remark}
The requirement that $X_{\phi}$ have homology of $S^1 \times S^2$ was only
needed to make the discussion more elementary, and in general can be omitted. 
Braam and Donaldson \cite[Part II, Proposition 3.5]{BD} show that \emph{any} 
choice of $\phi$ gives an admissible object $X_{\phi}$ in Floer's category, 
and that the properly defined Floer homology of $X_{\phi}$ is independent of 
$\phi$. 
\end{remark}


\section{Non-triviality}
For two-component links $L = \ell_1\cup\ell_2$ with linking number $\lk 
(\ell_1,\ell_2) \neq 0$, the instanton Floer homology $\I_*(\Sigma,L)$ 
must be non-trivial by Theorem~\ref{linking}. In this section, we will 
give a sufficient condition for $\I_*(\Sigma,L)$ to be non-trivial even 
when $\lk(\ell_1,\ell_2) = 0$.

Let $Y$ be a closed, irreducible, orientable 3-manifold, and let $0 \neq 
v \in H^2 (Y;\Z/2)$. In the proof of~\cite[Theorem 3]{KM2}, Kronheimer 
and Mrowka show that the instanton Floer homology of $Y$ constructed from
$PU(2)$ connections in the bundle $\ad (E)$ with $w_2 (\ad (E)) = v$ must
be non-trivial. This implies that $\I_*(\Sigma,L)$ is non-trivial whenever 
$X_{\phi}$ is irreducible. A standard $3$--manifold topology argument can 
be used to show that the irreducibility of $X$ implies that of $X_{\phi}$, 
which leads to the following theorem.

\begin{theorem}  
Let $L = \ell_1 \cup \, \ell_2$ be an oriented two-component link in an 
integral homology sphere $\Sigma$ such that the link exterior is irreducible.
Then $\I_* (\Sigma,L)$ is non-trivial.
\end{theorem}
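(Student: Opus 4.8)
The plan is to reduce the statement to the non-triviality result for irreducible 3-manifolds cited from Kronheimer and Mrowka, by verifying the two inputs that this reduction requires: first, that $X_\phi$ carries a suitable $\Z/2$ cohomology class, and second, that $X_\phi$ is irreducible. The first point is essentially already in place: since $H_*(X_\phi) = H_*(S^1\times S^2)$, we have $H^2(X_\phi;\Z) = \Z$, and the mod $2$ reduction of a generator is a nonzero class $v\in H^2(X_\phi;\Z/2)$; this is exactly the class $w_2(\ad(E))$ for the bundle $E$ used to define $I_*(X_\phi) = \I_*(\Sigma,L)$. So the entire content of the proof is the topological claim that irreducibility of the link exterior $X$ implies irreducibility of $X_\phi$.

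To prove that $X_\phi$ is irreducible, I would argue by contradiction: suppose $S\subset X_\phi$ is an embedded $2$-sphere that does not bound a ball. Let $T\subset X_\phi$ be the torus along which the two boundary components of $X$ were glued, so that $X_\phi\setminus T = \Int X$. By an innermost-curve / standard transversality argument, isotope $S$ to meet $T$ transversally in a collection of circles, and minimize the number of such circles within the isotopy class of $S$. If $S\cap T = \emptyset$, then $S$ lies in $\Int X$; since $X$ is irreducible, $S$ bounds a ball in $X$, hence in $X_\phi$, a contradiction. If $S\cap T\neq\emptyset$, take a circle $c$ of the intersection that is innermost on $S$, bounding a disk $D\subset S$ with $\Int D\cap T=\emptyset$, so $D\subset X$ with $\p D = c\subset T$. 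The incompressibility of $T$ in $X$ — which, for a link exterior in a homology sphere with the two boundary tori, follows from standard $3$-manifold topology once one knows $X$ is irreducible and the tori are not boundary-parallel to each other in a degenerate way (alternatively, one handles the compressible case by reducing $X$ further) — lets us conclude $c$ bounds a disk $D'\subset T$. Then $D\cup D'$ is a sphere in $X$, which bounds a ball by irreducibility of $X$; using this ball one isotopes $S$ across $T$ to remove the circle $c$ (and the rest of its parallel family), contradicting minimality.

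The step I expect to be the main obstacle is dealing with the case where $T$ fails to be incompressible in $X$, i.e.\ where an innermost disk $D$ does not have its boundary bounding a disk on $T$. In that situation one has a genuine compressing disk for $T$ in $X$; compressing $T$ along it produces a sphere, which bounds a ball in the irreducible manifold $X$, and one must then analyze whether $T$ was boundary-parallel or whether $X$ was a solid torus — configurations that would force the link $L$ to be an unlink or a Hopf-type link and can be treated directly, or excluded because they contradict the hypothesis after noting that a solid-torus exterior component cannot occur for a two-component link in a homology sphere in the relevant way. Organizing this case analysis cleanly — ideally by invoking the classification of Haken's or the standard fact that a knot exterior in $S^3$ (or homology sphere) has incompressible boundary unless the knot is unknotted, applied componentwise and then to $X$ — is where the real care is needed; everything else is routine innermost-disk surgery.
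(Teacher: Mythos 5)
Your proposal is correct and follows exactly the paper's (very terse) proof: quote Kronheimer--Mrowka's non-triviality theorem for irreducible $3$-manifolds with $0 \neq v \in H^2(Y;\Z/2)$, note that $w_2(\ad E) \neq 0$ since $c_1(E)$ is odd in $H^2(X_\phi)=\Z$, and supply the ``standard $3$-manifold topology argument'' that irreducibility of $X$ passes to $X_\phi$, which the paper does not even spell out. The one step you flag as delicate --- incompressibility of the boundary tori of $X$ --- resolves more cleanly than your case analysis suggests: if a boundary torus of $X$ compressed, surgering it along the compressing disk would produce a sphere having that torus on one side and the \emph{other} boundary torus on the other side, so neither side is a ball, contradicting irreducibility of $X$; hence your Case B is vacuous and no solid-torus or boundary-parallel analysis is needed.
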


Since link exteriors are irreducible for non-split links in the 3-sphere, 
we conclude the following: 

\begin{corollary}
For all non-split, two-component links in $S^3$, the Floer homology $\I_* 
(S^3,L)$ is non-trivial.
\end{corollary}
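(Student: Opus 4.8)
The plan is to deduce the corollary from the preceding theorem by checking its sole hypothesis: that the exterior $X = S^3 - \Int N(L)$ of a non-split two-component link $L = \ell_1 \cup \ell_2 \subset S^3$ is irreducible. Since $S^3$ is an integral homology sphere, the theorem then applies verbatim with $\Sigma = S^3$ and delivers the non-triviality of $\I_*(S^3,L)$.

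To verify that $X$ is irreducible, I would start with an embedded $2$-sphere $S \subset X$ and show it bounds a ball in $X$. By Alexander's theorem, $S$ bounds a ball $B$ in $S^3$. Each tubular neighborhood $N(\ell_i)$ is connected and disjoint from $S$, so $B$ either contains $N(\ell_i)$ entirely or is disjoint from it; this gives three cases. If $B$ is disjoint from $N(L)$, then $B \subset X$ and we are done. If $B$ contains both $N(\ell_1)$ and $N(\ell_2)$, replace $B$ by the complementary ball $S^3 - \Int B$, which is also bounded by $S$ and is disjoint from $N(L)$, so again $S$ bounds a ball in $X$. The remaining case is that $B$ contains exactly one of $N(\ell_1)$, $N(\ell_2)$, say the first; then $\ell_1 \subset \Int B$ and $\ell_2 \subset S^3 - B$, so $S$ is a $2$-sphere in $S^3 - L$ separating $\ell_1$ from $\ell_2$, contradicting the assumption that $L$ is non-split. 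Hence every embedded $2$-sphere in $X$ bounds a ball, i.e.\ $X$ is irreducible.

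Once irreducibility of $X$ is established, the theorem immediately gives that $\I_*(S^3,L)$ is non-trivial, which completes the proof. The only step that requires any care is the case analysis above, and even that is routine once Alexander's theorem is in hand, so I anticipate no genuine obstacle; one could equally well just invoke the standard fact from $3$-manifold topology that exteriors of non-split links in $S^3$ are irreducible, but recording the short self-contained argument seems preferable.
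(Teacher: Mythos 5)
Your proposal is correct and follows the same route as the paper: the paper's entire argument is to invoke the standard fact that exteriors of non-split links in $S^3$ are irreducible and then apply the preceding theorem. You simply supply the short Alexander's-theorem case analysis that the paper leaves as a cited standard fact, and that case analysis is sound (including the use of the complementary ball when $B$ contains both components).
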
 

On the other hand, for any split link $L \subset \Sigma$, we have $\I_*
(\Sigma,L) = 0$ since $w_2$ must evaluate non-trivially on $S^2 \subset 
X_\phi$, but there are no non-trivial flat $SO(3)$ connections on $S^2$ 
due to the fact that $\pi_1 (S^2) = 1$. 
       

\section{A surgery description}
Let $L \subset \Sigma$ be an oriented two-component link in a homology $3$-sphere. 
In what follows we will give a surgery description of a manifold $Y$ such that 
$\I_*(\Sigma,L) = I_*(Y)$. This will allow us to calculate $\I_*(\Sigma,L)$ 
for several examples.

Attach a band from one component of $L$ to the other matching orientations,
and call the resulting knot $k$. Introduce a small circle $\gamma$ going
once around the band with linking number zero. Frame $\gamma$ by zero and 
$k$ by an integer $m = \pm 1$ such that $m$--surgery along $k$ results in a 
homology sphere $\Sigma + m \cdot k$.  Any manifold obtained from $\Sigma$ by 
performing surgery on the framed link $k\,\cup\,\gamma$ will be called $Y$. 
According to \cite[Part II, Proposition 3.5]{BD}, see also \cite{hoste}, the 
manifold $Y$ is diffeomorphic to $X_{\phi'}$ for a choice of gluing map $\phi'$. 
Since $Y$ has the integral homology of $S^1\times S^2$, the map $\phi'$ must 
be as in Lemma \ref{L:choice}. The independence of the choice of $\phi$ then 
implies that 
\[
I_*(Y)=I_*(X_{\phi'})=I_*(X_\phi)=\I_*(\Sigma,L).
\]  

To calculate $I_*(Y)$, we will use the Floer exact triangle of 
\cite{floer2}, see also \cite{BD}. Let $\gamma$ be a knot in an integral 
homology sphere $M$. Denote by $M - \gamma$ the integral 
homology sphere obtained by $(-1)$--surgery along $\gamma$, and by $Y = 
M + 0 \cdot \gamma$ the homology $S^1\times S^2$ obtained by 
$0$--surgery along $\gamma$. The instanton Floer homology groups of the three 
manifolds are then related by the Floer exact triangle of total degree 
$-1$\,:

\medskip

\begin{picture}(250,84)
        \put(170,64)          {$I_*\,(Y)$}
        \put(152,50)          {\vector(-1,-1){20}}
        \put(229,30)          {\vector(-1,1){20}}
        \put(110,12)          {$I_*\,(M)$}
        \put(160,15)          {\vector(1,0){50}}
        \put(230,12)          {$I_*\,(M - \gamma)$}
\end{picture}

\begin{example}
Let $L_n \subset S^3$ be the Hopf link with linking number $n$ as in 
Figure \ref{figureA} (where $n = 2$). Then $\I_*(S^3,L) = I_*(Y)$ for the 
manifold $Y$ as in Figure \ref{figureB}.

\begin{figure}[!ht]
 \begin{minipage}[b]{0.48\linewidth}
  \centering
  \psfrag{lk}{$\lk=n$}
  \includegraphics[scale=.75]{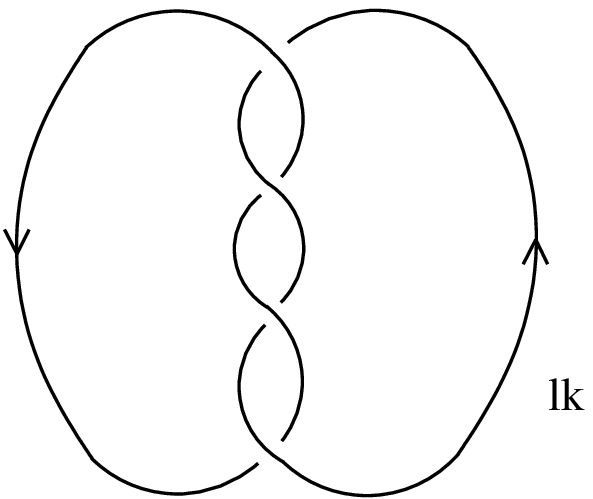}
  \caption{}\label{figureA}
 \end{minipage}
 \begin{minipage}[b]{0.48\linewidth}
  \centering
  \psfrag{0}{$0$}
  \psfrag{-1}{$-1$}
  \includegraphics[scale=.7]{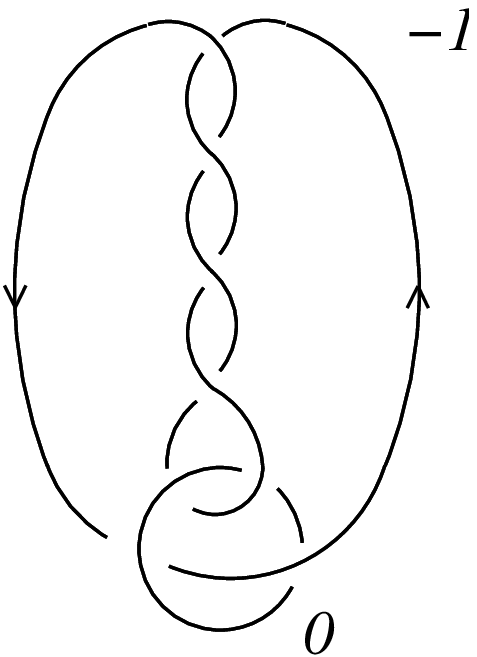}
  \caption{}\label{figureB}
 \end{minipage}
\end{figure}

\noindent
Apply the Floer exact triangle with $M = S^3$ viewed as the manifold obtained
by $(-1)$--surgery on the trivial knot framed by $-1$ in Figure \ref{figureB}.
Let $\gamma$ be the zero framed circle in Figure \ref{figureB}. Since $I_* 
(M) = 0$, we have an isomorphism $I_* (Y) = I_*(M -\gamma)$, where the 
manifold $M -\gamma$ has surgery description as shown in Figure 
\ref{figureC}. 

\begin{figure}[!ht]
\centering
\psfrag{blow down}{blow down}
\psfrag{-1}{$-1$}
 \includegraphics[scale=.75]{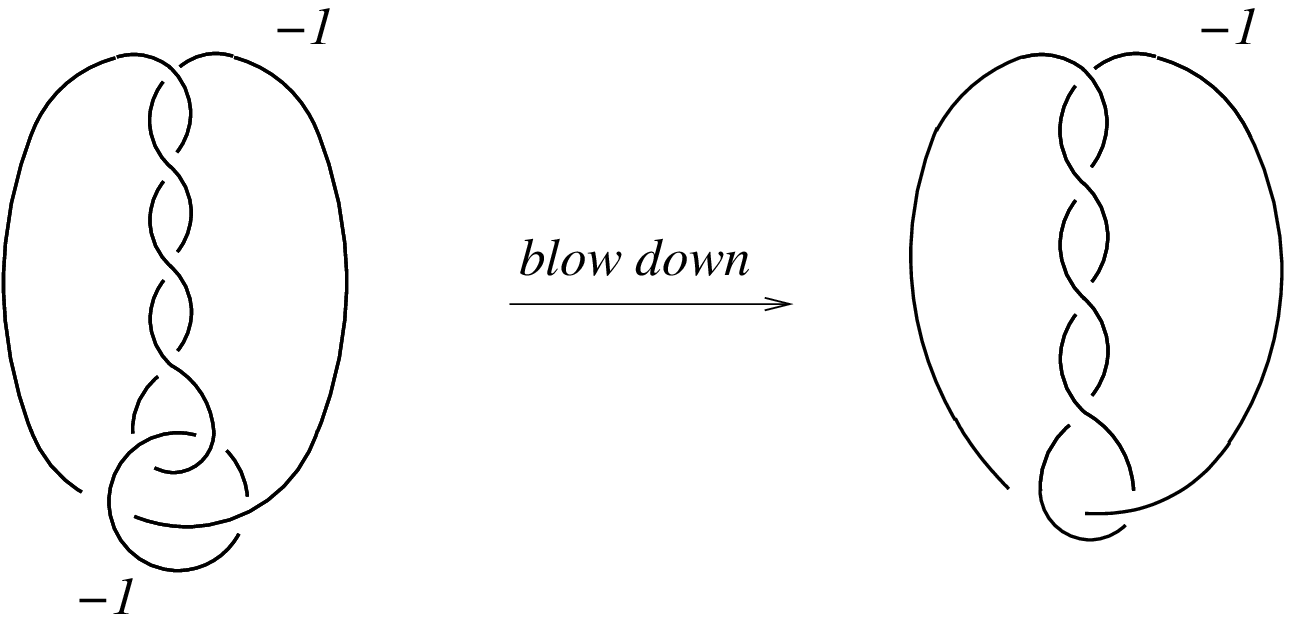}
\caption{}
\label{figureC}
\end{figure}

After the blow down, we see that $M - \gamma$ is the result of 
$(-1)$--surgery on a twist knot, hence is diffeomorphic to the Brieskorn 
homology sphere $\Sigma(2,3,6n+1)$ with reversed orientation; see for 
instance \cite[Figure 3.19]{Sav1}. Therefore (cf. Fintushel--Stern \cite{FS}) 
\begin{gather}
\I_*(S^3,L_n) = (\Z^{n/2},0,\,\Z^{n/2},0,\,\Z^{n/2},0,\,\Z^{n/2},0)
\;\;\text{if $n$ is even, and} \notag \\
\I_*(S^3,L_n) = (\Z^{(n-1)/2},0,\,\Z^{(n+1)/2},0,\,\Z^{(n-1)/2},0,\,\Z^{(n+1)/2},0)
\;\;\text{if $n$ is odd}.\notag
\end{gather}

\smallskip

\end{example}


\section{Relation with the instanton Floer homology of knots}
In this section, we will show that the Floer homology $\I_* (\Sigma,L)$ can
be viewed as a reduced version of the instanton knot Floer homology $KHI$ 
of Kronheimer and Mrowka.

The Floer homology $KHI$ is defined as follows; see for instance \cite{KM1}. 
Let $L = \ell_1\,\cup\ldots \cup\,\ell_r$ be an oriented link in a homology 
sphere $\Sigma$, and $F_r$ a genus-one surface with $r$ boundary components 
$\delta_1,\ldots,\delta_r$. Form a closed manifold $Z$ by attaching manifolds
$F_r \times S^1$ and $\Sigma - \Int N(L)$ to each other along their 
boundaries in such a fashion that each $\delta_i$ matches the canonical 
longitude of the component $\ell_i$ of $L$, and the $S^1$ factor matches the 
meridians. 

Let $E$ be a $U(2)$--bundle over $Z$ with $w = c_1 (E)$ dual to a curve $\nu 
\subset F_r$ representing a generator of the first homology of the closed 
genus-one surface obtained by closing up the $r$ boundary components of 
$F_r$ by discs. The instanton Floer homology with complex coefficients 
arising from the space of $PU(2)$--connections in $\ad (E)$ modulo the group 
of automorphisms of $E$ with determinant one will be denoted $I_*(Z)_w$. It 
has a relative grading by $\mathbb Z/8$. 

If $z \in Z$ is a point, the $\mu$--map gives us a degree-four operator 
$\mu(z): I_* (Z)_w \to I_* (Z)_w$ with eigenvalues $\pm 2$. The space $I_*
(Z)_w$ is then a sum of two subspaces of equal dimensions,
\[
I_* (Z)_w\; =\; I_* (Z)_{w,2}\,\oplus\, I_* (Z)_{w,-2},
\]
namely, the eigenspaces of $\mu(z)$ corresponding to the eigenvalues $2$ 
and $-2$. By definition,
\[
KHI (\Sigma,L) = I_* (Z)_{w,2}.
\]

Similarly, given a two-component link $L \subset \Sigma$, the Floer homology 
$\I_* (\Sigma,L)$ splits into a sum of the eigenspaces of the degree-four
operator $\I_*(\Sigma,L) \to \I_*(\Sigma,L)$ corresponding to the eigenvalues 
$2$ and $-2$. These eigenspaces have equal dimension. The $(+2)$--eigenspace 
will be denoted by $\I'_* (\Sigma,L)$.

\begin{theorem}\label{thm2}
Let $L = \ell_1\,\cup\,\ell_2$ be an oriented two-component link in a homology 
sphere $\Sigma$. Then 
\[
\chi (\I'_* (\Sigma,L))\; =\; \pm \lk (\ell_1,\ell_2)
\]
and 
\[
KHI (\Sigma,L)\; =\; \I'_* (\Sigma,L)\,\otimes\,H_*(T^2;\mathbb C).
\]
\end{theorem}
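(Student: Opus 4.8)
The plan is to relate the manifold $Z$ used to define $KHI(\Sigma,L)$ with the sewn-up manifold $X_\phi$ used to define $\I_*(\Sigma,L)$. For a two-component link, $Z$ is obtained by gluing $F_2 \times S^1$ to the link exterior $X = \Sigma - \Int N(L)$, where $F_2$ is a genus-one surface with two boundary circles. The key observation is that $F_2 \times S^1$ is itself a manifold with two torus boundary components, and it can be described by gluing: namely, $F_2$ deformation retracts onto a wedge of circles, but more to the point, cutting $Z$ along the torus $T^2 = \nu \times S^1$ (where $\nu$ is the nonseparating curve on the closed-up surface) exhibits the relationship. First I would show that $Z$ is obtained from $X_\phi$ by a standard operation — removing a neighborhood of a circle and regluing — so that the two Floer homologies are related by excision, or alternatively that $Z$ and $S^1 \times X_\phi$-type objects fit into an excision cobordism.

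More concretely, the cleanest route uses Floer's excision principle, exactly as in the proof of Theorem~\ref{linking} for the independence statement. One writes $F_2 \times S^1$ as a union along a torus of a pair of pants times $S^1$ pieces, or observes that attaching $F_2 \times S^1$ to $X$ is the same as first sewing up $X$ into $X_\phi$ and then attaching a further piece accounting for the genus. The upshot should be an identification of the form $I_*(Z)_w \cong \I_*(\Sigma,L) \otimes H_*(T^2;\mathbb{C})$ as relatively $\mathbb{Z}/8$-graded groups: the genus-one surface $F_2$ contributes, after closing up, a torus whose $S^1$-product carries the extra factor $H_*(T^2;\mathbb{C})$, which has total dimension $4$ and Euler characteristic $0$. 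I would make this precise by comparing the flat moduli spaces: a flat $SO(3)$ connection on $Z$ with the prescribed $w_2$ restricts to a flat connection on $X_\phi$ together with the data of a flat connection on $F_2 \times S^1$ matching along the tori, and the latter moduli space is $H^1(T^2;\mathbb{C})$-like, giving the tensor factor. The $\mu(z)$-operator acts on the torus factor with eigenvalues $\pm 2$, and the $(+2)$-eigenspace picks out exactly half: $I_*(Z)_{w,2} \cong \I_*(\Sigma,L) \otimes (\text{half of } H_*(T^2))$. Then I would invoke the defining equation $KHI(\Sigma,L) = I_*(Z)_{w,2}$ and the definition of $\I'_*(\Sigma,L)$ as the $(+2)$-eigenspace of the analogous operator on $\I_*(\Sigma,L)$ to rewrite $I_*(Z)_{w,2} = \I'_*(\Sigma,L) \otimes H_*(T^2;\mathbb{C})$; the dimension count works because $\dim H_*(T^2;\mathbb{C}) = 4$ and the two eigenspaces of $\mu(z)$ on $I_*(Z)_w$ have equal dimension, so halving on the $Z$ side and keeping the full torus factor on the $\I_*$ side match up.

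For the Euler characteristic statement, I would simply combine the second formula with Theorem~\ref{linking}: since $H_*(T^2;\mathbb{C})$ has Euler characteristic zero, the tensor-product formula by itself gives no information, so instead I use that $\I_*(\Sigma,L) = \I'_*(\Sigma,L) \oplus \I''_*(\Sigma,L)$ with the two summands of equal dimension and related by the $\mathbb{Z}/8$-grading shift coming from $\mu(z)$. One checks that the $(-2)$-eigenspace is obtained from the $(+2)$-eigenspace by a degree-four shift, which sends $\chi$ to $-\chi$ on the mod-$8$-graded level after the appropriate sign bookkeeping; hence $\chi(\I_*(\Sigma,L)) = \chi(\I'_*) + \chi(\I''_*)$, and tracking the signs yields $\chi(\I'_*(\Sigma,L)) = \pm\lk(\ell_1,\ell_2)$, i.e. exactly half of the $\pm 2\lk(\ell_1,\ell_2)$ from Theorem~\ref{linking}. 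The sign and the precise behavior of $\chi$ under a degree-four shift in a $\mathbb{Z}/8$-graded theory is the one point requiring care.

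The main obstacle I expect is the excision argument identifying $I_*(Z)_w$ with $\I_*(\Sigma,L) \otimes H_*(T^2;\mathbb{C})$ at the chain or homology level rather than just for Euler characteristics — one must ensure the tori along which excision is performed carry non-trivial $w_2$ so that the relevant moduli spaces contain no reducibles, verify that the admissibility hypotheses of Floer's excision theorem hold for all the cut-open pieces, and keep track of the relative $\mathbb{Z}/8$-gradings across the excision isomorphism. A secondary subtlety is checking that the $\mu(z)$-operator and its eigenspace decomposition are compatible with this excision identification, so that the $(+2)$-eigenspaces correspond; this should follow from naturality of the $\mu$-map under the cobordisms implementing excision, but it needs to be stated carefully.
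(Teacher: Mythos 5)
Your strategy for the second identity --- exhibit $I_*(Z)_w$ as $\I_*(\Sigma,L)\otimes H_*(T^2;\mathbb C)$ by an excision/genus-reduction argument and then pass to $(+2)$-eigenspaces of $\mu(z)$ --- is the same as the paper's in spirit; the paper also reduces everything to Braam--Donaldson's Proposition 3.11(2). The difference is that the paper does not leave the topological identification at the level of ``cut $F_2\times S^1$ along tori and compare flat moduli spaces.'' It proves a concrete alternative description of $Z$: namely, $Z$ is obtained by attaching $F_1\times S^1$ to the exterior of the band-sum knot $\ell_1\,\#\,\ell_2$ and then performing $0$-surgery on a small circle $\gamma$ running once around the band. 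This is established by a short $4$-dimensional argument (attach $F_2\times D^2$ to a $4$-manifold bounding $\Sigma$, split off a $1$-handle $N(J)\times D^2$ along an arc $J$ joining the two boundary components of $F_2$, then trade the $1$-handle for a $2$-handle). That description is exactly what makes the Braam--Donaldson proposition applicable and what matches $Z$ against the surgery description of $X_\phi$ from Section 4. In your write-up this identification is precisely the step you defer as ``the main obstacle,'' and it is the whole content of the proof: without it, the isomorphism $I_*(Z)_w\cong\I_*(\Sigma,L)\otimes H_*(T^2;\mathbb C)$ is an assertion rather than an argument, and comparing flat moduli spaces alone cannot establish it, since Floer homology is not determined by the set of flat connections.

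One step in your first part is wrong as stated. A degree-four shift in a $\mathbb Z/8$-graded group preserves the parity of the grading, so it preserves the Euler characteristic rather than negating it; if it sent $\chi$ to $-\chi$, you would get $\chi(\I_*(\Sigma,L))=\chi(\I'_*)+\chi(\I''_*)=0$, contradicting Theorem~\ref{linking} whenever $\lk(\ell_1,\ell_2)\neq 0$. The correct bookkeeping is $\chi(\I'_*)=\chi(\I''_*)=\tfrac{1}{2}\,\chi(\I_*(\Sigma,L))=\pm\lk(\ell_1,\ell_2)$, which is what the paper means by calling the first identity a direct consequence of Theorem~\ref{linking}. Your final answer is correct, but the stated mechanism is inconsistent with it.
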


\begin{proof}
The first statement is a direct consequence of Theorem \ref{linking}. To 
prove the second statement, we will use a different description of the 
manifold $Z$. Remember that $Z$ is obtained by attaching $F_2 \times S^1$ 
to the exterior of the link $L$. We claim that $Z$ can be obtained by 
first attaching $F_1\times S^1$ to the knot $\ell_1\,\#\,\ell_2$, which is a 
band-sum of the knots $\ell_1$ and $\ell_2$, and then performing 0-surgery 
on a small loop $\gamma$ going once around the band with linking number zero. 

To prove this claim, we will resort to a 4-dimensional picture. Choose a 
compact oriented 4-manifold $W$ with boundary $\Sigma$. Then the manifold 
$Z$ will be the boundary of the 4-manifold obtained from $W$ by attaching 
$F_2 \times D^2$ to $\p W$ along the two solid tori $\p F_2\,\times D^2$. 
This operation can be done in two steps. First, we choose a properly 
embedded arc $J \subset F_2$ connecting the two boundary components of 
$F_2$. Let $N(J)$ with a tubular neighborhood of $J$ in $F_2$, and attach 
the 1-handle $N(J) \times D^2$ to $\p W$. What is left to attach is $F_1 
\times D^2$. It is attached along $\p F_1 \times D^2$ to the band-sum $\ell_1
\,\#\,\ell_2$, the band running geometrically once over the 1-handle. Since 
we are only interested in the boundary of the resulting 4-manifold, we trade 
the 1-handle for a 2-handle to complete the proof.

With the above description of the manifold $Z$ in place, the second statement 
of the theorem follows from Proposition 3.11 (2) of \cite[Part II]{BD}.
\end{proof}

Let $k_+$ and $k_-$ be knots in $\Sigma$ related by a single crossing change, 
and let $k_0$ be the corresponding two-component link, see Figure
\ref{figureD}. 

\bigskip

\begin{figure}[!ht]
\centering
\psfrag{k+}{$k_+$}
\psfrag{k-}{$k_-$}
\psfrag{k0}{$k_0$}
 \includegraphics[scale=.75]{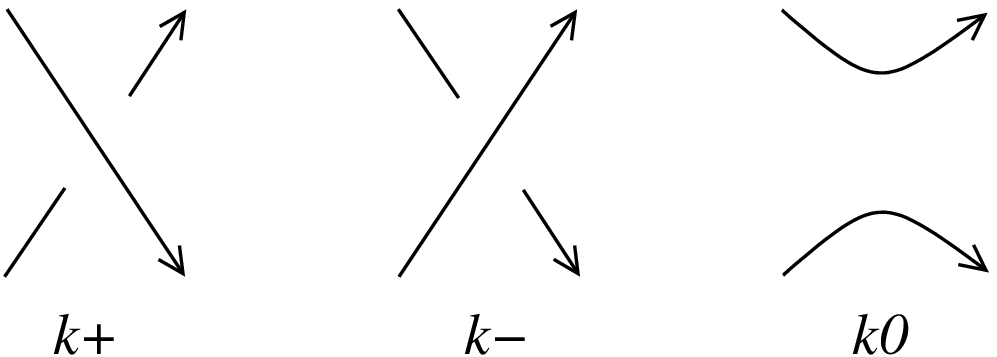}
\caption{}
\label{figureD}
\end{figure}

\noindent
The instanton Floer homology of these can be included into the following 
Floer exact triangle, see \cite[Theorem 3]{KM1},

\medskip

\begin{picture}(250,84)
        \put(150,64)          {$KHI\,(\Sigma,k_0)$}
        \put(152,50)          {\vector(-1,-1){20}}
        \put(229,30)          {\vector(-1,1){20}}
        \put(75,12)           {$KHI\,(\Sigma,k_+)$}
        \put(160,15)          {\vector(1,0){50}}
        \put(230,12)          {$KHI\,(\Sigma,k_-)$}
\end{picture}

\noindent
Observe that $\chi(KHI_*(k)) = 1$ for all knots $k \subset \Sigma$, see 
Kronheimer--Mrowka \cite[Theorem 1.1]{KM1} and also Braam--Donaldson 
\cite[Part II, Example 3.13]{BD}. This is consistent with the above exact 
triangle because
\[
\chi(KHI(\Sigma,k_0)) = \chi(\I'_*\,(\Sigma,k_0)\,\otimes\, H_*(T^2)) = 
\chi(\I'_*\,(\Sigma,k_0)) \cdot \chi (T^2) = 0
\]
by Theorem \ref{thm2}.


\end{document}